\DeclareMathOperator{\diag}{diag}
\DeclareMathOperator{\rank}{rank}
\DeclareMathOperator{\range}{range}
\DeclareMathOperator{\re}{Re}
\DeclareMathOperator{\im}{Im}
\DeclareMathOperator{\calM}{\mathcal{M}}
\DeclareMathOperator{\calN}{\mathcal{N}}
\DeclareMathOperator{\calF}{\mathcal{F}}
\newtheorem{thm}{Theorem}
\newtheorem{definition}{Definition}
\newtheorem{lemma}{Lemma}
\newtheorem{prop}{Proposition}
\newtheorem{assumption}{Assumption}
\theoremstyle{remark}
\newtheorem{example}{Example}
\newtheorem{remark}{Remark}
\title{Generic Existence of Unique Lagrange Multipliers \\ in AC Optimal Power Flow}
\author{Adrian Hauswirth, Saverio Bolognani, Gabriela Hug, and Florian D\"orfler
\thanks{The authors are with the Department of Information Technology and Electrical Engineering, ETH Zurich,
				8092 Zurich, Switzerland. Email:
				{\tt\small \{hadrian,bsaverio,ghug,dorfler\}@ethz.ch}. This work was supported by ETH Zurich funds and the SNF AP Energy Grant \#160573.}}
\begin{document}

\maketitle
\thispagestyle{empty}
\pagestyle{empty}
\begin{abstract}
Solutions to nonlinear, nonconvex optimization problems can fail to satisfy the KKT optimality conditions even when they are optimal. This is due to the fact that unless constraint qualifications (CQ) are satisfied, Lagrange multipliers may fail to exist. Even if the KKT conditions are applicable, the multipliers may not be unique. These possibilities also affect AC optimal power flow (OPF) problems which are routinely solved in power systems planning, scheduling and operations. The complex structure -- in particular the presence of the nonlinear power flow equations which naturally exhibit a structural degeneracy -- make any attempt to establish CQs for the entire class of problems very challenging.
In this paper, we resort to tools from differential topology to show that for AC OPF problems in various contexts the linear independence constraint qualification is satisfied almost certainly, thus effectively obviating the usual assumption on CQs. Consequently, for any local optimizer there generically exists a unique set of multipliers that satisfy the KKT conditions.
\end{abstract}

\begin{IEEEkeywords}
Optimization, Power systems.
\end{IEEEkeywords}

\section{Introduction}
\IEEEPARstart{T}{he} Karush-Kuhn-Tucker (KKT) conditions are the most widely used tool to study the optimality of a solution to a constrained optimization problem. They state that for every optimizer there exists a set of Lagrange multipliers that meet certain algebraic conditions.
However, for an optimizer the KKT conditions and hence the existence of (unique) Lagrange multipliers hold only if the active constraints at that point are well-behaved. This quality is captured by \emph{constraint qualifications} (CQ). Different CQs that imply the applicability of the KKT conditions have been studied~\cite{bazaraa_nonlinear_2006, wachsmuth_licq_2013}.
For nonlinear, nonconvex problems CQs can be hard to verify a priori. Hence, CQs are often stated as technical assumptions, and their restrictiveness is rarely discussed.

In power systems, the question about the existence and uniqueness of Lagrange multipliers has mostly been ignored. This applies in particular to the extensive category of AC optimal power flow (OPF) problems~\cite{frank_introduction_2016, frank_optimal_2012, capitanescu_critical_2016, huneault_survey_1991, momoh_review_1999}, i.e., the class of problems that incorporate nonlinear power flow (PF) equations as constraints, even though the KKT optimality conditions have always been an integral part in the study of AC OPF problems~\cite{dommel_optimal_1968, burchett_large_1982, peschon_optimal_1972, alsac_optimal_1974}. In particular, they are the cornerstone of different numerical methods such as primal-dual interior point and Newton SQP~\cite{jabr_primal-dual_2002, de_sousa_optimal_2012, liu_extended_2002, soler_modified_2012, sousa_robust_2011, kourounis_towards_2018, wang_computational_2007}. 
Furthermore, recent methods in distributed online optimization for power systems often require the communication of Lagrange multipliers~\cite{molzahn_survey_2017, dallanese_photovoltaic_2016, bolognani_distributed_2013-1, bolognani_distributed_2015}. Finally, Lagrange multipliers play an important role in market applications where they correspond to price signals~\cite{conejo_locational_2005, baughman_real-time_1991}. While the existence of Lagrange multipliers is sufficient for most optimization methods to converge, market applications in particular require uniqueness, since systematically ambiguous prices would raise doubts about the fairness of a pricing scheme.

All of these works assume the applicability of the KKT conditions either explicitly or implicitly, but do not address the restrictiveness of this assumption. 
An exception is~\cite{almeida_critical_1996} which studies \emph{critical} cases for which OPF solvers can fail.

In this paper, we show that for prototypical AC OPF problems the \emph{linear independence CQ} (LICQ) holds generically and unique Lagrange multipliers exist for all minima generically.
For this, we employ a result based on \emph{Thom's Transversality Theorem} from differential topology that has previously been established for more abstract optimization problems~\cite{spingarn_optimality_1982}.
We elaborate a more accessible result and provide the necessary context for its immediate application in power systems optimization.
Although there exist weaker CQs than LICQ and more general optimality conditions than KKT, our result shows that for practical purposes there is no need to resort to these more sophisticated tools.

The main result admits a probabilistic interpretation and hints at an important application of our result: If an AC OPF problem is randomly sampled from a suitably parametrized class of problems, then the claim that LICQ holds generically amounts to saying that LICQ holds with probability one. For this to hold, the type of parametrization needs to be of high enough dimension for Thom's theorem to be applicable. 
We will see that due to structural degeneracies in the AC power flow equations this is not automatically satisfied. However, for perturbations in loads and network parameters the result can be applied and unique Lagrange multipliers exist almost certainly.
We believe that this insight is particularly important for mission-critical online power system applications that require strong a priori guarantees for AC OPF problems.

The rest of the paper is structured as follows: Section~\ref{sec:prelim} reviews the KKT conditions and CQs, and it also establishes the requirements for genericity of CQs. Section~\ref{sec:opf} introduces a standard class of AC OPF problems and provides examples where CQs fail to hold. Section~\ref{sec:pert} identifies classes of perturbations for AC OPF that guarantee genericity of LICQ\@.

\section{Preliminaries}\label{sec:prelim}

\subsection{Notation}

By $\mathbb{R}^n_+$ we denote the non-negative orthant of $\mathbb{R}^n$ and $a \leq b$ for $a,b \in \mathbb{R}^n$ denotes a componentwise inequality. For a map $f: \mathbb{R}^n \rightarrow \mathbb{R}^m$, the $m \times n$-matrix $\nabla f(x)$ denotes the Jacobian of $f$ at $x$ and $\nabla_{x'} f(x)$ denotes the submatrix of partial derivatives only with respect to the variable $x'$. The \emph{rank of $f$ at $x$} is given by the rank of $\nabla f(x)$. If $m=1$, we call $\nabla f$ the \emph{gradient of $f$}. A map $f: \mathbb{R}^n \rightarrow \mathbb{R}^m$ is of \emph{class $C^r$} if it is $r$-times continuously differentiable  on its domain.

\subsection{Optimality Conditions}

In what follows, we review CQs and optimality conditions for a nonlinear, nonconvex optimization problem of the form
\begin{equation} \label{eq:opt} \tag{P}
\begin{aligned}
		\underset{x \in \mathbb{R}^{n}}{\text{minimize}} \quad & f(x) &\\
		\text{subject to}\quad & h(x) = 0 \qquad g(x) \leq 0
\end{aligned}
\end{equation}
where $h: \mathbb{R}^n \rightarrow \mathbb{R}^I$, $g: \mathbb{R}^n \rightarrow \mathbb{R}^J$ and the cost function $f: \mathbb{R}^n \rightarrow \mathbb{R}$ are continuously differentiable (i.e., $C^1$). Let $\mathbf{J}(x) := \{j \,|\, g_j(x) = 0\}$ denote the set of active constraints at $x \in \mathbb{R}^{n}$ and $g_{\mathbf{J}(x)}(\cdot) := \left[ g_j(\cdot) \right]_{j \in \mathbf{J}(x)}$ the vector obtained from stacking all active inequality constraint functions at~$x$.

\begin{definition} [LICQ]\label{def:licq}
Let $\widehat{x} \in \mathbb{R}^{n}$ be a feasible point of~\eqref{eq:opt}. We say that the \emph{linear independence constraint qualification (LICQ) holds at $\widehat{x}$} if
\begin{equation*}
		\rank \begin{bmatrix}
		\nabla h(\widehat{x}) \\ \nabla g_{\mathbf{J}(\widehat{x})}(\widehat{x})
		\end{bmatrix} = I + |\mathbf{J}(\widehat{x})| \,.
\end{equation*}
\end{definition}

Let the \emph{Lagrangian of~\eqref{eq:opt}} be given as
\begin{equation*}
		L(x, \lambda, \mu) := f(x) + \lambda^T h(x) + \mu^T g(x)
\end{equation*}
for $\lambda \in \mathbb{R}^I$ and $\mu \in \mathbb{R}^J_+$, then
\begin{equation*}
		\nabla_x L(\widehat{x}, \widehat{\lambda}, \widehat{\mu}) = \nabla f(\widehat{x}) + \widehat{\lambda}^T \nabla h(\widehat{x}) + \widehat{\mu}^T \nabla g(\widehat{x})
\end{equation*}

With these notions we can recall a standard optimality condition for nonlinear optimization problems~\cite[11.8]{luenberger_linear_1984}.

\begin{thm}[KKT\@; first-order necessary conditions]\label{thm:first-order}
Let $\widehat{x} \in \mathbb{R}^n$ solve~\eqref{eq:opt}, and assume that the LICQ holds at $\widehat{x}$. Then, there exist $\widehat{\lambda} \in \mathbb{R}^I$ and $\widehat{\mu} \in \mathbb{R}^J_+$ such that
\begin{equation} \label{eq:first-order}
		\nabla_x L(\widehat{x}, \widehat{\lambda}, \widehat{\mu}) = 0 \  \text{and} \  \mu_j = 0 \  \text{for all} \  j \notin \mathbf{J}(\widehat{x}) \,.
\end{equation}
Furthermore, $\widehat{\lambda} \in \mathbb{R}^I$ and $\widehat{\mu} \in \mathbb{R}^J_+$ are unique.
\end{thm}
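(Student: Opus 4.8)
The plan is to establish existence and uniqueness separately, and in both cases the linear independence encoded in LICQ is the crucial lever. First I would dispose of the inactive inequality constraints: for $j \notin \mathbf{J}(\widehat{x})$ we have $g_j(\widehat{x}) < 0$, so by continuity these remain strictly feasible in a neighborhood of $\widehat{x}$ and play no local role; setting $\widehat{\mu}_j = 0$ for such $j$ immediately satisfies the complementarity requirement $\mu_j = 0$ for $j \notin \mathbf{J}(\widehat{x})$. It therefore suffices to produce a vector $\widehat{\lambda} \in \mathbb{R}^I$ and nonnegative multipliers $\widehat{\mu}_j \geq 0$ indexed by the active set such that $\nabla f(\widehat{x}) + \widehat{\lambda}^T \nabla h(\widehat{x}) + \sum_{j \in \mathbf{J}(\widehat{x})} \widehat{\mu}_j \nabla g_j(\widehat{x}) = 0$.

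For existence, I would argue that optimality of $\widehat{x}$ precludes the existence of any linearized feasible descent direction: the system $\nabla f(\widehat{x}) d < 0$, $\nabla h(\widehat{x}) d = 0$, $\nabla g_{\mathbf{J}(\widehat{x})}(\widehat{x}) d \leq 0$ admits no $d \in \mathbb{R}^n$. This is exactly the step where LICQ is indispensable: given any $d$ satisfying the equalities and active inequalities, the linear independence of the combined constraint gradients lets one invoke the implicit function theorem to construct a $C^1$ feasible arc $x(t)$ with $x(0) = \widehat{x}$ and $\dot{x}(0) = d$ that stays feasible for small $t > 0$; along this arc $\tfrac{d}{dt} f(x(t))\big|_{t=0} = \nabla f(\widehat{x}) d < 0$ would contradict the local minimality of $\widehat{x}$. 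Having established infeasibility of that system, I would apply a theorem of the alternative (Motzkin's transposition theorem, or Farkas' lemma after splitting each equality into two inequalities) to extract precisely the desired $\widehat{\lambda}$ and $\widehat{\mu}_j \geq 0$ certifying stationarity.

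For uniqueness, suppose $(\widehat{\lambda}, \widehat{\mu})$ and $(\widehat{\lambda}', \widehat{\mu}')$ both satisfy~\eqref{eq:first-order}. Complementarity forces $\widehat{\mu}_j = \widehat{\mu}'_j = 0$ for $j \notin \mathbf{J}(\widehat{x})$, so it remains only to compare the active components. Subtracting the two stationarity equations eliminates $\nabla f(\widehat{x})$ and yields $(\widehat{\lambda} - \widehat{\lambda}')^T \nabla h(\widehat{x}) + \sum_{j \in \mathbf{J}(\widehat{x})} (\widehat{\mu}_j - \widehat{\mu}'_j) \nabla g_j(\widehat{x}) = 0$, i.e., a vanishing linear combination of the rows of the matrix in Definition~\ref{def:licq}. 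The LICQ rank condition states precisely that these $I + |\mathbf{J}(\widehat{x})|$ rows are linearly independent, so every coefficient must vanish, giving $\widehat{\lambda} = \widehat{\lambda}'$ and $\widehat{\mu}_j = \widehat{\mu}'_j$ on the active set, hence on all of $\mathbb{R}^J$.

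The main obstacle is the existence argument's passage from a linearized feasible direction to an honest feasible arc. Without a constraint qualification, the tangent cone of the feasible set can be strictly smaller than the linearized cone, and the contradiction with optimality breaks down; LICQ is exactly what guarantees their equality through the implicit function theorem. The theorem-of-the-alternative step and the entire uniqueness argument are then routine linear algebra.
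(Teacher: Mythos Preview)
Your argument is correct and is essentially the textbook route: reduce to the active constraints, use LICQ together with the implicit function theorem to identify the tangent cone with the linearized cone so that local optimality rules out linearized descent directions, then invoke a theorem of the alternative for existence, and read off uniqueness directly from the linear independence of the active constraint gradients. One small point worth tightening in the existence step is the construction of the feasible arc when some active inequality gradients satisfy $\nabla g_j(\widehat{x}) d = 0$ rather than a strict inequality; the usual fix is either to treat those indices as additional equalities in the implicit-function construction, or to perturb $d$ within the linearized cone to make all active inequalities strictly decreasing and pass to the limit. Either way the conclusion stands.

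As for comparison with the paper: there is nothing to compare. The paper does not prove Theorem~\ref{thm:first-order}; it is quoted as a standard result with a reference to \cite[11.8]{luenberger_linear_1984}. Your sketch is precisely the kind of proof one finds there (or in \cite{bazaraa_nonlinear_2006}), so it is consistent with what the paper relies on.
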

Theorem~\ref{thm:first-order} concisely describes \emph{stationarity} (of the Lagrangian), (primal/dual) \emph{feasibility} and \emph{complementary slackness} which are often associated with the KKT conditions~\cite{bazaraa_nonlinear_2006}.

Weaker constraint qualifications than the LICQ exist under which Theorem~\ref{thm:first-order} holds~\cite{bazaraa_nonlinear_2006, clarke_optimization_1990}.
However, the uniqueness of Lagrange multipliers is strongly related to the LICQ and does not in general hold under weaker CQs~\cite{kyparisis_uniqueness_1985, wachsmuth_licq_2013}.

\begin{remark}
		For convex problems~\eqref{eq:first-order} in Theorem~\ref{thm:first-order} is necessary and sufficient for global optimality~\cite{bazaraa_nonlinear_2006}. For non-convex problems, second-order optimality conditions have to be taken into account to certify (local) optimality~\cite{luenberger_linear_1984}.
		Also note that, in convex optimization \emph{Slater's condition} is the most widely-used CQ rather than LICQ, as it guarantees strong duality but not necessarily uniqueness. 
\end{remark}

\subsection{Genericity of LICQ}

Recall that a subset $S \subset \mathbb{R}^n$ has \emph{measure zero} if for every $\epsilon > 0$, $S$ can be covered by a countable family of $n$-cubes, the sum of whose measures is less than $\epsilon$. Informally, $S$ has measure zero if its $n$-dimensional volume is zero. If a property holds everywhere except on a set of measure zero, we say that it holds \emph{generically} or \emph{almost everywhere}.

Consider a parametrized optimization problem of the form
\begin{equation} \label{eq:opt_param} \tag{$\text{P}_\xi$}
\begin{aligned}
		\underset{x \in \mathbb{R}^n}{\text{minimize}} \quad & f(x) &\\
		\text{subject to}\quad & \widetilde{h}(x, \xi) = 0 \qquad && \widetilde{g}(x, \xi) \leq 0 \\
													 & h(x) = 0 \qquad && g(x) \leq 0
\end{aligned}
\end{equation}
where $\xi \in \Xi$ is a problem parameter and $\Xi$ is a nonempty, open subset of $\mathbb{R}^{k}$ with non-zero measure. In this context, $h: \mathbb{R}^n \rightarrow \mathbb{R}^{I}$ and $g: \mathbb{R}^n \rightarrow \mathbb{R}^{J}$ are referred to as \emph{fixed constraints}~\cite{spingarn_optimality_1982}. We define the \emph{fixed feasible set} as
\begin{align*}
		\mathcal{X} := \{x \, | \, h(x)= 0, \, g(x) \leq 0 \} \,.
\end{align*}
The functions $\widetilde{h}:\mathbb{R}^n\times \Xi \rightarrow \mathbb{R}^{\widetilde{I}}$ and $\widetilde{g}:\mathbb{R}^n\times \Xi \rightarrow \mathbb{R}^{\widetilde{J}}$ define \emph{variable constraints} due to their dependence on~$\xi$.

\begin{assumption}[Fixed LICQ]\label{ass1}
Given~\eqref{eq:opt_param}, assume that for all $x \in \mathcal{X}$ the LICQ holds with respect to $g$ and $h$, i.e.,
\begin{equation*}
		\rank \begin{bmatrix}
		\nabla h(x) \\ \nabla g_{\mathbf{J}(x)}(x)
		\end{bmatrix} = I + |\mathbf{J}(x)| \,.
\end{equation*}
\end{assumption}

The following theorem states that all feasible points of~\eqref{eq:opt_param} satisfy the LICQ for almost all values of $\xi$ and $x \in \mathcal{X}$ as long as the parametrized constraints of~\eqref{eq:opt_param} span a space of high enough dimension as function of $\xi$.

\begin{thm}[Genericity of LICQ]\label{thm:base1}
Consider the problem~\eqref{eq:opt_param} with $f, \widetilde{g}, \widetilde{h}, g$ and $h$ of class $C^r$ for all $x \in \mathcal{X}$ and $\xi \in \Xi$ with $r>\max(0, n - I - \widetilde{I})$ and let Assumption~\ref{ass1} be satisfied.
If the map
\begin{equation*}
		\xi \mapsto \left( \widetilde{h}(x, \xi) , \, \widetilde{g}(x, \xi) \right)
\end{equation*}
has rank $\widetilde{I}+\widetilde{J}$ for every $x \in \mathcal{X}$ and every $\xi \in \Xi$, then the LICQ holds for almost every $\widehat{\xi} \in \Xi$ and for every $\widehat{x}$ that is feasible for~\eqref{eq:opt_param}.
\end{thm}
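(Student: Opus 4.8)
The plan is to reduce genericity of LICQ to a \emph{regular-value} statement for finitely many combined constraint maps and then invoke Thom's parametric transversality theorem, using the $\xi$-rank hypothesis to supply the transversality that makes this work. First I would note that any point $\widehat{x}$ feasible for~\eqref{eq:opt_param} lies in $\mathcal{X}$, and that its active constraints consist of all equalities together with inequalities indexed by a fixed set $B := \mathbf{J}(\widehat{x}) \subseteq \{1,\dots,J\}$ and a variable set $A := \{\, j : \widetilde{g}_j(\widehat{x},\widehat{\xi}) = 0\,\} \subseteq \{1,\dots,\widetilde{J}\}$. For each of the finitely many pairs $(A,B)$ I would form the stacked equality map $c_{A,B}(x,\xi) := (h(x),\, g_B(x),\, \widetilde{h}(x,\xi),\, \widetilde{g}_A(x,\xi))$ into $\mathbb{R}^m$ with $m := I + |B| + \widetilde{I} + |A|$. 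By Definition~\ref{def:licq}, LICQ holds at a feasible $\widehat{x}$ with active set $(A,B)$ if and only if $\nabla_x c_{A,B}(\widehat{x},\widehat{\xi})$ has full row rank $m$, that is, exactly when $0$ is a regular value of $c_{A,B}(\cdot,\widehat{\xi})$ at $\widehat{x}$. It therefore suffices to show that for almost every $\xi$, $0$ is a regular value of $c_{A,B}(\cdot,\xi)$ for every pair $(A,B)$.

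The crux is to show that, as a map of \emph{both} arguments, $c_{A,B}$ is transverse to $\{0\}$, i.e.\ the joint Jacobian $\bigl[\,\nabla_x c_{A,B}\ \big|\ \nabla_\xi c_{A,B}\,\bigr]$ is surjective at every zero. Because $h,g$ do not depend on $\xi$, this Jacobian is block lower-triangular, $\left[\begin{smallmatrix} \nabla_x h & 0 \\ \nabla_x g_B & 0 \\ \nabla_x \widetilde{h} & \nabla_\xi \widetilde{h} \\ \nabla_x \widetilde{g}_A & \nabla_\xi \widetilde{g}_A \end{smallmatrix}\right]$, and I would argue via the left null space: a left annihilator $(\alpha,\beta,\gamma,\delta)$ first satisfies $\gamma^T \nabla_\xi \widetilde{h} + \delta^T \nabla_\xi \widetilde{g}_A = 0$, whence $\gamma=0,\delta=0$ by the assumed rank $\widetilde{I}+\widetilde{J}$ of $\xi \mapsto (\widetilde{h},\widetilde{g})$; the surviving relation $\alpha^T \nabla_x h + \beta^T \nabla_x g_B = 0$ then gives $\alpha=0,\beta=0$ by Assumption~\ref{ass1}. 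To keep both hypotheses applicable at every zero, I would restrict $c_{A,B}$ to the open set $V_{A,B} := \{(x,\xi)\in\mathbb{R}^n\times\Xi : g_j(x)<0\ \forall j\notin B,\ \widetilde{g}_j(x,\xi)<0\ \forall j\notin A\}$, on which every zero of $c_{A,B}$ is a feasible point lying in $\mathcal{X}$ with active set exactly $(A,B)$, so that Assumption~\ref{ass1} and the $\xi$-rank condition are both available there.

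With joint transversality in hand, $M := c_{A,B}^{-1}(0)\cap V_{A,B}$ is a $C^r$ manifold of dimension $n+k-m$, and a value $\xi$ is a regular value of the projection $\pi:M\to\Xi$ precisely when $\nabla_x c_{A,B}(\cdot,\xi)$ is surjective at every zero, i.e.\ when LICQ holds at all feasible points with active set $(A,B)$. Sard's theorem applied to the $C^r$ map $\pi$ requires $r>\max(0,\dim M-k)=\max(0,n-m)$ and yields that the non-regular parameters form a measure-zero set. This bound is most restrictive for $A=B=\emptyset$, where $m=I+\widetilde{I}$, giving precisely $r>\max(0,n-I-\widetilde{I})$ as assumed. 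Intersecting the finitely many resulting full-measure parameter sets over all $(A,B)$ produces a single full-measure set of $\xi$ for which LICQ holds at every feasible point.

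The step I expect to be the main obstacle is the joint-surjectivity claim together with its bookkeeping: one must confine attention to $V_{A,B}$ so that every relevant zero genuinely lies in $\mathcal{X}$ (where the two rank hypotheses hold), and one must run the transversality/Sard step on the \emph{fibered}, non-product domain $V_{A,B}$ through the projection $\pi$ rather than on a Cartesian product. Checking that these pieces interlock --- the block structure of the Jacobian, the two rank hypotheses applying exactly on $\mathcal{X}$, and the correct regularity budget for Sard --- is where the real work lies; the reduction in the first paragraph and the finite intersection in the last are routine.
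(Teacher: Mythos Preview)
Your argument is correct and rests on the same underlying tool as the paper --- parametric transversality/Sard --- but the packaging differs. The paper first shows (Lemma~\ref{lem:finite}) that each face $\overline{\mathcal{X}}_{\mathbf{J}'}$ of the fixed feasible set is a $C^r$ submanifold, then applies Thom's theorem (Lemma~\ref{lem:face}) to the map $\Phi:\overline{\mathcal{X}}_{\mathbf{J}'}\times\Xi\to\mathbb{R}^{\widetilde{I}+\widetilde{J}}$ with target submanifolds $\calM_{\widetilde{\mathbf{J}}'}=\{(0,y):y_j=0,\,j\in\widetilde{\mathbf{J}}'\}$, and finally unwinds transversality of $\Phi(\cdot,\xi)$ back into the LICQ rank condition via a kernel/range computation. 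You instead stack \emph{all} active constraints into a single map $c_{A,B}$ on an open subset of $\mathbb{R}^n\times\Xi$ and read off joint surjectivity directly from the block-triangular Jacobian: the $\xi$-rank hypothesis kills $(\gamma,\delta)$, Assumption~\ref{ass1} kills $(\alpha,\beta)$. This is more elementary --- it stays in Euclidean coordinates, avoids constructing the face manifolds, and replaces the paper's range/kernel manipulation by the left-null-space argument --- while the paper's two-stage decomposition separates the roles of fixed and variable constraints more transparently. Both routes hit the same regularity bound $r>\max(0,n-I-\widetilde{I})$ from the worst case $A=B=\emptyset$, and both finish by intersecting finitely many full-measure sets. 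Your worry about $V_{A,B}$ being a ``non-product'' domain is not a genuine obstacle: $V_{A,B}$ is an open subset of $\mathbb{R}^n\times\Xi$, hence an open manifold, and the Sard-on-the-projection step is exactly the standard proof of Thom's parametric transversality theorem, valid on any $C^r$ manifold.
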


From a geometric perspective, LICQ states that the \emph{faces}, defined by the individual constraints that delimit the feasible set must not intersect tangentially at a given point. However, tangency is ``fragile'' in the sense that a small perturbation can eliminate it. Theorem~\ref{thm:base1} formalizes this idea.

Theorem~\ref{thm:base1} isolates a special case of~\cite[Prop~3.6]{spingarn_optimality_1982} which considers a more abstract setting and a stronger notion of genericity. We provide a short self-contained proof in the appendix that captures the main feature of interest.

\section{Power System Model}\label{sec:opf}

We now turn to the study of CQs for AC OPF\@. After introducing the standard AC PF equations, we give examples showing that LICQ does not necessarily hold for AC OPF\@.

Consider a power network consisting of $N$ nodes and $M$ lines. Each node $k$ has an associated voltage $u_k = v_k e^{j\theta_k}$ with $j = \sqrt{-1}$, a fixed load $p^\text{L}_k + j q^\text{L}_k$, variable generation $p^\text{G}_k + j q^\text{G}_k$ as well as a nodal shunt admittance $y^{\text{sh}}_k = g^{\text{sh}}_k + jb^{\text{sh}}_k$.
The set of nodes connected to $k$ through a line are denoted by $\mathbf{N}(k)$. Each line between two nodes $k$ and $l$ is represented by a standard $\Pi$-model with a series admittance $y_{kl} = g_{kl} + jb_{kl}$ and a shunt admittance $y^{\text{sh}}_{kl} = g^{\text{sh}}_{kl} + jb^{\text{sh}}_{kl}$.

The nodal admittance matrix $Y$ is defined as
\begin{equation*}
		[Y]_{kl} = \begin{cases}
			y^{\text{sh}}_k +
			\sum_{i \in \mathbf{N}(k)}{y_{ki} + \frac{1}{2} y^{\text{sh}}_{ki}} \qquad & \text{for } k=l \\
			- y_{kl} \qquad & \text{for } l \in \mathbf{N}(k) \\
			0 \qquad & \text{otherwise} \\
		\end{cases}
\end{equation*}
The AC power flow equations~\cite{frank_introduction_2016} can be written as
\begin{equation*}
		F(x) = \begin{bmatrix}
						 p^\text{G} - p^\text{L} - \re \{ \diag(u) (Y u)^* \} \\
						 q^\text{G} - q^\text{L} - \im \{ \diag(u) (Y u)^* \} \\
				\end{bmatrix} = 0 \,.
\end{equation*}
where $(\cdot)^*$ denotes the complex conjugate, and we use
\begin{equation} \label{eq:state_def}
		x = \left( p^\text{G} , \, q^\text{G}, \, v , \, \theta \right) \in \mathbb{R}^{4N}
		\,.
\end{equation}
Note that $F: \mathbb{R}^{4N} \rightarrow \mathbb{R}^{2N}$ is smooth (i.e.,\,$C^\infty$) in\,$x$.

Additional constraints are generally introduced to limit generation output and voltage magnitudes, enforce line limits, etc. These \emph{operational constraints} can be formulated as
\begin{align*}
		h(x) = 0 \qquad g(x) \leq 0
\end{align*}
where $h$ and $g$ map $\mathbb{R}^{4N}$ to $\mathbb{R}^I$ and $\mathbb{R}^J$ respectively. When applying Theorem~\ref{thm:base1}, these constraints will be the fixed constraints, i.e., not subject to perturbations. As such, $g$ and $h$ will need to satisfy Assumption~\ref{ass1}, that is, the operational constraints in isolation satisfy the LICQ\@. This can often be easily checked, unlike for the full AC OPF problem.

Given any continuously differentiable cost function $f: \mathbb{R}^{4N} \rightarrow \mathbb{R}$ we define the AC OPF problem
\begin{equation} \label{eq:opf} \tag{OPF}
\begin{aligned}
		\underset{x \in \mathbb{R}^{4N}}{\text{minimize}} \quad & f(x) &\\
		\text{subject to}\quad & F(x) = 0 \\ &h(x) = 0 \qquad g(x) \leq 0
\end{aligned}
\end{equation}
We are interested in whether all (local) solutions to~\eqref{eq:opf} satisfy the LICQ and consequently whether unique multipliers exist that satisfy the KKT conditions at each solution.
We present two examples which show that the type of constraints encountered in AC OPF have the potential to produce degeneracies resulting from a breakdown of LICQ\@. Both examples have been designed to provide insight and be analytically tractable, rather than being realistic.

\begin{example}\label{ex:pf1} We show that even in a simple economic dispatch problem, the multipliers associated with the power balance at a bus (also referred to as \emph{nodal price}) can be ambiguous.

Consider a 2-bus power system with a lossless line connecting buses 1 and 2 with unit susceptance $y_{12} = -1j $. A~generator is connected to bus 1. A flexible load is situated at bus 2 modeled as negative generation with constant power factor $ \frac{q_2}{p_2} = \alpha$ and $\alpha > 0$. Assume that no shunt admittances are present (i.e., $y^{sh}_1 = y^{sh}_2 = y^{sh}_{12} = 0$). Furthermore, we enforce an upper voltage limit $v_2 \leq \overline{v}$ at bus~2 for which we choose $\overline{v} := \sqrt{\alpha^2 + 1}$. Hence, the AC power flow equations and operational constraints can be written as
\begin{align} \label{eq:ex3_pf}
F(x) &= \begin{bmatrix}
		p_1 + v_2 \sin \theta_2 \\
		p_2 - v_2 \sin \theta_2 \\
		q_1 + v_2 \cos \theta_2 - 1 \\
		q_2 + v_2 \cos \theta_2 - v_2^2 \\
\end{bmatrix} = 0  \\
h(x) &= \begin{bmatrix}
		q_2 - \alpha p_2
\end{bmatrix} = 0 \\
g(x) &= \begin{bmatrix}
		v_2 - \overline{v}
\end{bmatrix} = 0 \, ,  
\end{align}
where, for simplicity, we have eliminated $\theta_1 = 0$ and $v_1 = 1$ of the slack bus.

Hence, the partial derivatives of the constraint functions with respect to $p_1, p_2, q_1, q_2$ and $v_2$ are given by
\begingroup 
\setlength\arraycolsep{4pt}
\begin{align} \label{eq:ex3_deriv}
		\begin{bmatrix}
			\nabla F \\ \nabla h \\ \nabla g
		\end{bmatrix} &=
		\begin{bmatrix}
				1 & 0 & 0 & 0 & \sin \theta_2 & v_2 \cos \theta_2 \\
				 0 & 1 & 0 & 0 & - \sin \theta_2 & -v_2 \cos \theta_2 \\
				 0 & 0 & 1 & 0 & \cos \theta_2 & - v_2 \sin \theta_2 \\
				 0 & 0 & 0 & 1 & \cos \theta_2 - 2 v_2 & - v_2 \sin \theta_2 \\
				0 & -\alpha & 0 & 1 & 0 & 0 \\
				0 & 0 & 0 & 0 & 1 & 0
		\end{bmatrix}.
\end{align}
\endgroup
Notice that for $\sin \theta_2 = \alpha \cos \theta_2 $ the matrix~\eqref{eq:ex3_deriv} does not have full rank because the second and the last three rows become linearly dependent. This holds in particular for the feasible operating point $x^\star$ given by
\begin{align*}
		v^\star_2 = \sqrt{\alpha^2 + 1} \qquad \theta^\star_2 = \arctan \alpha \\
		p^\star_2 = -p^\star_1 = -\alpha \qquad q^\star_1 = 0 \qquad q^\star_2 = \alpha^2 \,.
\end{align*}
Note that the voltage constraint $v_2^\star - \overline{v} \leq 0$ is active. It follows that LICQ does not hold at $x^\star$ and strictly speaking Theorem~\ref{thm:first-order} is not applicable.\footnote{However, weaker constraint qualifications than LICQ hold and the KKT conditions can be applied albeit without the uniqueness guarantee~\cite{bazaraa_nonlinear_2006}.} Nevertheless, $x^\star$ minimizes the cost function
\begin{align*}
		f(x) = \frac{1}{2} p_1^2 + \alpha p_2
\end{align*}
subject to $F(x) = 0, h(x)=0$ and $g(x) \leq 0$.
The gradient of $f$ at $x^\star$ is given by
$
		\nabla f(x^\star) = \begin{bmatrix}
		\alpha & \alpha & 0 & 0 & 0 & 0
		\end{bmatrix} $.
Despite LICQ not holding, we may solve
\begin{align*}
	 0 = \nabla f(x^\star) + \kappa^T \nabla F(x^\star) + \lambda \nabla h(x^\star) + \mu \nabla g(x^\star)
\end{align*}
with $\mu \geq 0$ which yields
\begin{align*}
	 \begin{bmatrix} \kappa^T & \lambda & \mu \end{bmatrix} = \begin{bmatrix} - \alpha & - \alpha & 0 & 0 & 0 & 0 \end{bmatrix} + \zeta w
\end{align*}
with $\zeta \geq 0$, and
$w = \begin{bmatrix} 0 & -\alpha & 0 & 1 & -1 & \sqrt{\alpha^2 + 1} \end{bmatrix}$.

We conclude that the multiplier for the power balance at bus~2, i.e., the negative nodal price, can take any value less than or equal to $-\alpha$.
Figure~\ref{fig:ex3_2bus} illustrates the feasible set after eliminating $q_2$. The upper voltage constraint is tangential to the \emph{nose curve}~\cite{cutsem_voltage_1998} spanned by the PF equations which illustrates that LICQ does not hold.
\end{example}

\begin{example}\label{ex:pf2}
We illustrate a case where Lagrange multipliers fail to exist altogether. Consider the same 2-bus power grid as in Example~\ref{ex:pf1}. The operational constraints are given by
\begin{align*}
	 h(x) &= q_2 + \alpha \left(p_2 - p^\text{L}_2 -  v_2^{1/2}\right) = 0 \\
	 g(x)& = p_2^2 + q_2^2 - \overline{s}^2 \leq 0
\end{align*}
where $\alpha, p^\text{L}_2$ and $\overline{s}$ are parameters. The function $h$ describes a non-trivial load model with a fixed active load component $p^\text{L}_2$, an ``exponential load'' component~\cite{cutsem_voltage_1998} combined with a fixed power factor on the remaining load. The constraint $g$ describes the limit on the apparent power consumed at bus~2.

Again, we set $v_1 = 1$ and $\theta_1 = 0$. Further, we eliminate variables and constraints except $v_2$ and $\theta_2$.\footnote{Given $v_2, \theta_2$, one can always compute $p_1, p_2, q_1$, $q_2$ according to~\eqref{eq:ex3_pf}.} Both variables are unconstrained except for the operational constraints
\begin{align*}
		h(v_2, \theta_2) &= v_2^2 + v_2 \left( \alpha \sin \theta_2 - \cos \theta_2 \right) - \alpha (v_2^{1/2} + p_2^\text{L} ) = 0 \\
		g(v_2, \theta_2) &= v_2^2 \left( v_2^2 - 2 v_2 \cos \theta_2 +1 \right) - \overline{s}^2 \leq 0\,.
\end{align*}

For parameter values
$\alpha = \tfrac{2}{9}(\sqrt{3} + 3)$, $ \overline{s}^2 = 2- \sqrt{3}$, and $
		p^\text{L}_2 = -\tfrac{1}{8}(15\sqrt{3} - 23)
$ we obtain the feasible domain shown in Figure~\ref{fig:ex4_2bus}.
At the point $(v_2^\star,\theta_2^\star)=(1, \frac{\pi}{6})$ the LICQ fails to hold. In addition, the curves cross over at $(v_2^\star,\theta_2^\star)$. This implies that for an appropriate cost function $f$, no multipliers exist that satisfy~\eqref{eq:first-order} even if the cross-over point is a minimizer $(v_2^\star,\theta_2^\star)$. Namely, if $\nabla f(v_2^\star,\theta_2^\star)$ does not lie in the space spanned by $\nabla h (v_2^\star,\theta_2^\star)$ and $\nabla g(v_2^\star,\theta_2^\star)$ the KKT condition~\eqref{eq:first-order} cannot be satisfied by any $\mu$ and $\lambda$.

\begin{figure}[htb]
		\centering
		\subfloat[]{\label{fig:ex3_2bus}
				\includegraphics[width=0.47\linewidth]{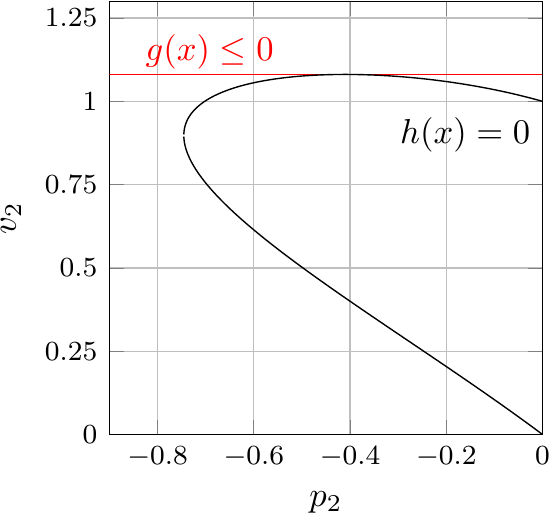}
		}
		\subfloat[]{\label{fig:ex4_2bus}
				\includegraphics[width=0.47\linewidth]{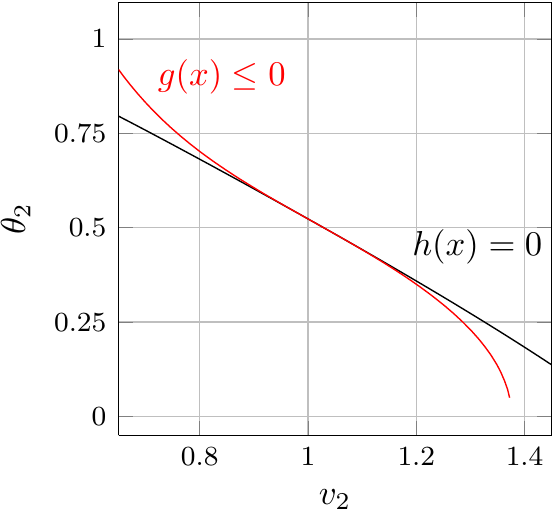}
		}
		\caption{Representation of OPF constraints that do not meet the LICQ, from Example~\ref{ex:pf1} and Example~\ref{ex:pf2}}
		\label{fig:ex12}
\end{figure}

\end{example}

\section{Perturbation Models for AC OPF}\label{sec:pert}

We use Theorem~\ref{thm:base1} to show that the feasible set of AC OPF problems satisfy the LICQ almost certainly. For this, we introduce the notion of a \emph{perturbation model} defined as a hypothetical parametrization of the AC OPF problem. In\,particular, we assume that the AC OPF problem is a generic instance of a parametrized class of problems. Consequently, if the perturbation model satisfies the requirements of Theorem~\ref{thm:base1}, namely if rank of the power flow equations as a function of the perturbation is large enough, then LICQ is almost certainly satisfied at the solutions of AC OPF\@.

In the following, we show that plausible perturbation models are available for AC OPF\@. These models are realistic in the sense that they assume variations in problem parameters that are naturally fluctuating or uncertain such as loads or grid parameters.
Consequently, the genericity of LICQ is guaranteed under conditions encountered when formulating an AC OPF problem from noisy measurements, e.g., in online applications. 

\subsection{Perturbation of fixed loads}

We claim that by perturbing real and reactive loads at every node of a $N$-bus power system, we obtain an effective perturbation model. For this, define the parameter vector
\begin{equation} \label{eq:pert1}
		\xi^\text{L} = \begin{bmatrix} p^\text{L} \\ q^\text{L} \end{bmatrix} \in \Xi^\text{L} \subseteq \mathbb{R}^{2N}
\end{equation}
where $\Xi^\text{L}$ is an open, nonempty set of load configurations. By defining $\widetilde{h}(x, \xi^\text{L}) = F(x)$ we turn the AC OPF problem~\eqref{eq:opf} into a parametrized optimization problem of the form~\eqref{eq:opt_param} with $n= 4N$, $\widetilde{I}= 2N$, $\widetilde{J}=0$ and $I,J$ given by the number of operational equality and inequality constraints.

\begin{prop}\label{prop:main1}
Consider a parametrized AC OPF problem of the form $\mathsf{P}(\xi^\text{L})$ with $\xi^\text{L}$ defined as in~\eqref{eq:pert1} and $\widetilde{h}(x, \xi^\text{L}) = 0$ denoting the parametrized AC power flow equations. Further, assume that the operational constraints $g$ and $h$ are of class $C^r$ with $r > 2N$ for all $x \in \mathcal{X}$ and that Assumption~\ref{ass1} holds.
Then, for almost every load profile $\xi^\text{L} \in \Xi^\text{L}$ the LICQ holds for every $x$ that is feasible for $\mathsf{P}(\xi^\text{L})$.
\end{prop}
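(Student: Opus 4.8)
The plan is to cast the load-perturbation model as an instance of the parametrized problem~\eqref{eq:opt_param} and then verify the three hypotheses of Theorem~\ref{thm:base1} one by one, so that the conclusion follows by direct invocation. Concretely, I would identify the variable equality constraint with the parametrized power flow equations, $\widetilde{h}(x,\xi^\text{L}) = F(x)$, where $F$ now carries its dependence on the loads $\xi^\text{L} = (p^\text{L}, q^\text{L})$; take $\widetilde{J} = 0$, since there are no variable inequality constraints; and let $g, h$ play the role of the fixed operational constraints, with $n = 4N$ and $\widetilde{I} = 2N$. Two of the three hypotheses are then immediate: the power flow map $F$ is $C^\infty$ and the operational constraints are $C^r$ by assumption, so the smoothness requirement $r > \max(0, n - I - \widetilde{I}) = \max(0, 2N - I)$ is implied by $r > 2N$ (and the regularity of the objective is irrelevant here, as LICQ depends only on the constraint gradients); moreover, Assumption~\ref{ass1} for $g$ and $h$ holds by hypothesis.

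The only hypothesis requiring genuine verification is the rank condition, namely that $\xi^\text{L} \mapsto \widetilde{h}(x, \xi^\text{L})$ has rank $\widetilde{I} + \widetilde{J} = 2N$ for every $x \in \mathcal{X}$ and every $\xi^\text{L} \in \Xi^\text{L}$. Here the structure of the AC power flow equations does the work. Inspecting $F$, the real load $p^\text{L}$ enters only the first $N$ (active-power) equations, each as an additive $-p^\text{L}_k$ term, while the reactive load $q^\text{L}$ enters only the second $N$ (reactive-power) equations, each as an additive $-q^\text{L}_k$ term; the nonlinear terms depend on $x$ alone. Hence the Jacobian with respect to the parameter is constant,
\begin{equation*}
		\nabla_{\xi^\text{L}} \widetilde{h}(x, \xi^\text{L}) = \begin{bmatrix} -\mathrm{Id}_N & 0 \\ 0 & -\mathrm{Id}_N \end{bmatrix} = -\mathrm{Id}_{2N} \,,
\end{equation*}
which has full rank $2N$ independently of $x$ and $\xi^\text{L}$, establishing the rank condition globally.

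With all hypotheses in place, I would conclude by applying Theorem~\ref{thm:base1}, obtaining that LICQ holds for almost every $\xi^\text{L} \in \Xi^\text{L}$ and every feasible $\widehat{x}$ of $\mathsf{P}(\xi^\text{L})$, which is the claim. I do not expect a serious technical obstacle: the critical computation is linear and block-decoupled, so the rank condition is verified by inspection rather than by analysis. The real content is conceptual, and worth stating explicitly: despite the structural degeneracies of the AC power flow equations noted in the introduction, which make the rank condition of Theorem~\ref{thm:base1} something that must be checked rather than taken for granted, perturbing all $2N$ nodal loads already supplies a full-rank parametrization, precisely because loads enter the balance equations additively and decoupled across the active and reactive blocks. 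This is exactly what makes the load-perturbation model an effective one.
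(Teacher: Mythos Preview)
Your proposal is correct and follows essentially the same route as the paper: identify the load-parametrized power flow equations with the variable equality constraint in~\eqref{eq:opt_param}, check that the smoothness and fixed-LICQ hypotheses of Theorem~\ref{thm:base1} are inherited from the assumptions, and observe that $\nabla_{\xi^\text{L}} \widetilde{h}$ is (up to sign) the $2N$-dimensional identity, giving the required rank. The paper's proof is terser but makes exactly these three points.
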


For practical purposes Proposition~\ref{prop:main1} implies that an AC OPF problem for which fixed loads at every bus are randomly sampled (from a continuous distribution) satisfy the LICQ everywhere with probability one.
In Examples~\ref{ex:pf1} and~\ref{ex:pf2} this may be seen as a deformation of the curves $h(x)=0$ in Figure~\ref{fig:ex12} which eliminates the tangency between the constraints.

\begin{proof}
We apply Theorem~\ref{thm:base1} directly. The requirement on the differentiability only applies to $g$ and $h$ since $\widetilde{h}$ is smooth. The fact that $n= 4N$ and $\widetilde{I}= 2N$ establishes the requirement that $g$ and $h$ have to be $r$-times continuously differentiable for all $x \in \mathcal{X}$. Finally,  $\rank \frac{d\widetilde{h}}{d\xi}(x, \xi^\text{L}) = 2N$ since $\frac{dh}{d\xi}(x, \xi^\text{L})$ is the $2N$-dimensional identity map.
\end{proof}

\subsection{Perturbation of shunt admittances}

Next we show that the perturbation of shunt admittances at every bus also guarantees the genericity of LICQ\@. For every node $k$ in the power network define the lumped shunt admittance $\widetilde{y}^\text{sh}_k = \widetilde{g}^\text{sh}_k + j \widetilde{b}^\text{sh}_k := \widetilde{y}^\text{sh}_k + \frac{1}{2} \sum_{i \in N(k)}{y^{\text{sh}}_{ki}} \in \mathbb{C}$ and define the associated parameter vector
\begin{equation} \label{eq:pert2}
	 \xi^{\text{sh}} = \begin{bmatrix} \widetilde{g}^\text{sh} \\ \widetilde{b}^\text{sh} \end{bmatrix} \in \Xi^\text{sh} \subseteq \mathbb{R}^{2N}
\end{equation}
If $\xi^{\text{sh}}$ varies over an open, nonempty set $\Xi^\text{sh}$, the AC OPF problem~\eqref{eq:opf} is again a parametrized optimization problem of the the form~\eqref{eq:opt_param} with $n= 4N$, $\widetilde{I}= 2N$, $\widetilde{J}=0$ as before.

\begin{prop}\label{prop:main2}
Consider a parametrized AC OPF problem of the form~\eqref{eq:opt_param} with $\xi^\text{sh}$ defined as in~\eqref{eq:pert2} and $\widetilde{h}(x, \xi^\text{sh}) = 0$ denoting the parametrized AC power flow equations. Further, assume that the operational constraints $g$ and $h$ are of class $C^r$ with $r > 2N$ for all $x \in \mathcal{X}$ and that Assumption~\ref{ass1} holds.
Furthermore, assume that $g$ and $h$ are such that for every $x \in \mathcal{X}$ we have $v_k > 0$ for all $k=1, \ldots, n$.
Then, for almost every value of $\xi \in \Xi$ the LICQ holds for every $x$ that is feasible for~\eqref{eq:opt_param}.
\end{prop}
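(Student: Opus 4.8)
The plan is to apply Theorem~\ref{thm:base1} directly, mirroring the proof of Proposition~\ref{prop:main1}. Since the parametrized power flow $\widetilde{h}$ is smooth in both arguments, the differentiability requirement of Theorem~\ref{thm:base1} again falls only on $g$ and $h$, and with $n = 4N$, $\widetilde{I} = 2N$ this reproduces the bound $r > 2N$; Assumption~\ref{ass1} is assumed outright. Because $\widetilde{J} = 0$, the only remaining hypothesis to verify is the rank condition: that the map $\xi^\text{sh} \mapsto \widetilde{h}(x, \xi^\text{sh})$ has full rank $\widetilde{I} = 2N$ for every $x \in \mathcal{X}$, equivalently that $\frac{d\widetilde{h}}{d\xi^\text{sh}}(x, \xi^\text{sh})$ is a nonsingular $2N \times 2N$ matrix. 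Everything reduces to computing this Jacobian.

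The key observation is that the shunt parameters enter the power flow equations \emph{only} through the diagonal of the admittance matrix $Y$. By the definition of $Y$, the entry $[Y]_{kk} = \widetilde{y}^\text{sh}_k + \sum_{i \in \mathbf{N}(k)} y_{ki}$ depends linearly on $\widetilde{y}^\text{sh}_k = \widetilde{g}^\text{sh}_k + j \widetilde{b}^\text{sh}_k$, whereas the off-diagonal entries $[Y]_{kl} = -y_{kl}$ are independent of $\xi^\text{sh}$. Writing the complex nodal injection as $S_k = u_k (Yu)_k^*$ and isolating the diagonal contribution, the only $\xi^\text{sh}$-dependent term is
\begin{equation*}
	|u_k|^2 \, (\widetilde{y}^\text{sh}_k)^* = v_k^2 \left( \widetilde{g}^\text{sh}_k - j \widetilde{b}^\text{sh}_k \right) \,.
\end{equation*}
Consequently the real injection $\re\{S_k\}$ picks up the term $v_k^2 \widetilde{g}^\text{sh}_k$ and the reactive injection $\im\{S_k\}$ picks up $-v_k^2 \widetilde{b}^\text{sh}_k$, with no cross-coupling between conductance and reactive power or between susceptance and real power.

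Differentiating the $2N$ nodal balance equations with respect to $\xi^\text{sh} = (\widetilde{g}^\text{sh}, \widetilde{b}^\text{sh})$ then yields the block-diagonal Jacobian
\begin{equation*}
	\frac{d\widetilde{h}}{d\xi^\text{sh}}(x, \xi^\text{sh}) = \begin{bmatrix} -\diag(v_1^2, \ldots, v_N^2) & 0 \\ 0 & \diag(v_1^2, \ldots, v_N^2) \end{bmatrix} \,,
\end{equation*}
whose determinant is $(-1)^N \prod_{k} v_k^4$. This is nonzero exactly when $v_k \neq 0$ for all $k$, which is guaranteed by the standing assumption that $v_k > 0$ on $\mathcal{X}$. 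The rank hypothesis of Theorem~\ref{thm:base1} is therefore met, and the conclusion follows verbatim.

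The step that requires the most care---and the only place the extra voltage assumption is invoked---is establishing this diagonal structure. I must confirm both that no off-diagonal entry of $Y$ carries any $\xi^\text{sh}$ dependence, and that the conjugation in $S_k = u_k (Yu)_k^*$ cleanly routes the conductance perturbation into $\re\{S_k\}$ and the susceptance perturbation into $\im\{S_k\}$, so that the two diagonal blocks decouple. Once this is verified, nonsingularity is immediate. Note the contrast with Proposition~\ref{prop:main1}, where the Jacobian is the identity irrespective of the operating point: here the positivity $v_k > 0$ is precisely the nondegeneracy condition that rules out a rank drop, which is why it must be imposed on $g$ and $h$.
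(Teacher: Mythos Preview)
Your proof is correct and follows essentially the same approach as the paper: apply Theorem~\ref{thm:base1} directly, note that the differentiability burden falls on $g,h$ with $r>2N$, and compute $\tfrac{d\widetilde{h}}{d\xi^\text{sh}}$ as a block-diagonal matrix with blocks $\pm\diag(v^2)$, full rank precisely when every $v_k>0$. The paper records both diagonal blocks with a minus sign, whereas your derivation gives $+\diag(v^2)$ in the reactive block; your sign is in fact the one consistent with $\im\{|u_k|^2(\widetilde{y}^\text{sh}_k)^*\}=-v_k^2\widetilde{b}^\text{sh}_k$, but this is immaterial to the rank argument.
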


\begin{proof}
As in the proof of Proposition~\ref{prop:main1}, we apply Theorem~\ref{thm:base1}. The crucial difference is that the derivative of $\widetilde{h}$ with respect to $\xi^\text{sh}$ is given by
\begin{equation*}
		\frac{d\widetilde{h}}{d\xi}(x, \xi^\text{sh}) = \begin{bmatrix} -\diag(v^2) & 0 \\ 0 & -\diag(v^2) \end{bmatrix}
\end{equation*}
where $v^2$ denotes the componentwise square of $v$. Therefore $\frac{d\widetilde{h}}{d\xi}(x, \xi^\text{sh})$ depends on the fact that none of the components of $v$ is zero in which case its rank is $2N$ as required.
\end{proof}

In comparison to Proposition~\ref{prop:main1}, the additional assumption that all voltage magnitudes have to be nonzero is easily assured if $g$ incorporates lower voltage bounds.
Note that Propositions~\ref{prop:main1} and~\ref{prop:main2} can be combined by requiring that at every node either a fixed load or a shunt is present.

\subsection{Perturbation of line parameters}

Finally, we ask whether in the absence of shunt admittances the perturbation of line parameters itself is enough to guarantee the genericity of LICQ\@. This is not the case. Namely, the following example reveals a \emph{structural} degeneracy of the PF equations without shunt admittances which cannot be mitigated by perturbation of line parameters only.

\begin{example}\label{ex:degen}
		Consider a network without shunt admittances, i.e., $y_{kl}^\text{sh} = y_k^\text{sh} = 0$ for all $k,l \in \mathbf{N}$. Then, consider the no-load configuration $p^\text{L} = q^\text{L} = 0$ of the power system with flat voltage profile $v e^{j\theta} = \mathds{1}$ (the vector of all ones). It is well known that $\mathds{1}$ is an eigenvector of $Y$. Therefore, perturbing the line admittances $y_{kl}$ does not change the operating point and the requirement for Theorem~\ref{thm:base1} is not satisfied.
\end{example}

\section{Conclusion}

We have discussed and illustrated that for AC OPF problems the failure to satisfy CQs, can lead to ill-defined Lagrange multipliers. However,
we have shown that under the weak technical (and in practice often valid) assumption of a perturbation model, the LICQ holds generically at all solutions giving rise to a unique set of Lagrange multipliers. Thereby, we effectively eliminate the need for the boilerplate assumption that CQs always hold for AC OPF\@. This guarantee is in full effect when problem parameters such as loads are randomly sampled, e.g., for online applications that derive problem parameters from noisy measurements.

\appendix

We provide a condensed proof of Theorem~\ref{thm:base1} based on Thom's Transversality Theorem.
The additional terms and definitions in this section are independent of the main body of the paper and follow~\cite{hirsch_differential_1976}.

\begin{definition}\label{def:transv}
		Let $\calF', \calN$ be manifolds and $\calM \subset \calN$ be a submanifold of $\calN$. A map $\Phi: \calF' \rightarrow \calN$ is \emph{transverse to $\calM$} if $T_{\Phi(x)} \calN = T_{\Phi(x)} \calM + D\Phi(x)(T_x \calF')$ for all $x$ such that $\Phi(x) \in \calM$ where $D\Phi$ denotes the differential of $\Phi$.
\end{definition}

In other words, the tangent space of $\calN$ is spanned by the tangent space of $\calM$ and the image of the tangent space of $\calF'$. The following classical result can be found in~\cite{hirsch_differential_1976}.

\begin{thm}[Thom's Parametric Transversality Theorem]\label{thm:thom}
		Let $\calF, \calN$ be $C^r$ manifolds and let $\calM \subset \calN$ be a $C^r$ submanifold of $\calN$. Further, let $\Phi: \calF \times \Xi \rightarrow \calN$ be of class $C^r$ where $\Xi \subset \mathbb{R}^k$ is an open set. Assume that $r > \max \{ 0, \dim \calF + \dim \calM - \dim \calN \}$ and that $\Phi(\cdot, \cdot)$ is transverse to $\calM$ (as a function of both arguments).
		Then, $\Phi(\cdot, \xi): \calF \rightarrow \calN$ is transverse to $\calM$ (as a function of the first argument) for almost all $\xi \in \Xi$.
\end{thm}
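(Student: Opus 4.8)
The plan is to deduce Thom's theorem from Sard's theorem by converting the transversality question for the slice maps $\Phi(\cdot,\xi)$ into a statement about regular values of a single projection map. The bridge is the transversal preimage theorem applied to the full map $\Phi$, together with the tangent-space description of the preimage that it supplies; this is the standard route in~\cite{hirsch_differential_1976}, which I would adapt here. Throughout I write $\Phi_\xi := \Phi(\cdot,\xi)$.

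First I would invoke the transversal preimage theorem: since $\Phi: \calF \times \Xi \rightarrow \calN$ is $C^r$ and transverse to $\calM$ in both arguments, the set $W := \Phi^{-1}(\calM)$ is a $C^r$ submanifold of $\calF \times \Xi$ with $\operatorname{codim} W = \dim \calN - \dim \calM$, and its tangent space is $T_{(x,\xi)} W = (D\Phi(x,\xi))^{-1}(T_{\Phi(x,\xi)}\calM)$ at every $(x,\xi) \in W$. In particular $\dim W = \dim \calF + k - (\dim \calN - \dim \calM)$. Next I would restrict the canonical projection to obtain $\pi := \mathrm{pr}_\Xi|_W : W \rightarrow \Xi$, which is again $C^r$, and apply Sard's theorem to $\pi$: the set of critical values has measure zero, so almost every $\xi \in \Xi$ is a regular value. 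Here the dimension bookkeeping is exactly what pins down the smoothness hypothesis, since Sard's theorem for a $C^r$ map $W \rightarrow \Xi$ requires $r > \max\{0, \dim W - k\}$, and $\dim W - k = \dim \calF + \dim \calM - \dim \calN$, which matches the assumed bound on $r$.

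The heart of the argument, and the step I expect to be the main obstacle, is the equivalence lemma: if $\xi_0$ is a regular value of $\pi$, then $\Phi_{\xi_0}$ is transverse to $\calM$. To prove it I would fix $x_0$ with $y_0 := \Phi(x_0,\xi_0) \in \calM$, take an arbitrary $v \in T_{y_0}\calN$, and use transversality of $\Phi$ to write $v = m + D\Phi(x_0,\xi_0)(a,b)$ with $m \in T_{y_0}\calM$, $a \in T_{x_0}\calF$, and $b \in \mathbb{R}^k$. Regularity of $\xi_0$ for $\pi$ means $\mathrm{pr}_\Xi$ maps $T_{(x_0,\xi_0)}W$ onto $\mathbb{R}^k$, so I can choose $a'$ with $(a',b) \in T_{(x_0,\xi_0)}W$, whence $D\Phi(x_0,\xi_0)(a',b) \in T_{y_0}\calM$ by the tangent-space formula above. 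Splitting $D\Phi(x_0,\xi_0)(a,b) = D\Phi(x_0,\xi_0)(a-a',0) + D\Phi(x_0,\xi_0)(a',b)$ then exhibits $v$ as an element of $T_{y_0}\calM + D\Phi_{\xi_0}(x_0)(T_{x_0}\calF)$, which is transversality of $\Phi_{\xi_0}$ at $x_0$; since $x_0$ was arbitrary, $\Phi_{\xi_0}$ is transverse to $\calM$. Combining this lemma with the almost-everywhere regularity delivered by Sard yields the claim. The delicate points to get right are the clean use of the tangent-space identity for $W$ and the absorption of the parameter tangent direction $b$ into $T_{y_0}\calM$ in the final splitting, which is where regularity of $\pi$ is genuinely used.
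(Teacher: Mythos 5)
Your proof is correct, but note that the paper does not actually prove Theorem~\ref{thm:thom}: it states it as a classical result and cites~\cite{hirsch_differential_1976} for the proof. The argument you give --- the transversal preimage theorem making $W = \Phi^{-1}(\calM)$ a $C^r$ submanifold with the tangent-space identity, Sard's theorem applied to the projection $\pi\colon W \rightarrow \Xi$ (where the count $\dim W - k = \dim \calF + \dim \calM - \dim \calN$ exactly accounts for the hypothesis on $r$), and the lemma that a regular value of $\pi$ yields transversality of the slice map $\Phi(\cdot,\xi_0)$ --- is precisely the standard proof found in that reference, so your attempt correctly supplies the proof the paper delegates to the literature.
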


Informally, Theorem~\ref{thm:thom} states that the manifold $\calM$ and the image of $\Phi(\cdot, \xi)$ are almost never tangent, i.e., they are transverse for almost all $\xi \in \Xi$. Theorem~\ref{thm:base1} is a generalization in so far as the feasible set of an optimization problem is delimited by multiple manifolds (i.e., \emph{faces}) that must intersect transversely in order to satisfy LICQ\@.

To show Theorem~\ref{thm:base1}, we first show that the set $\mathcal{X}$ described by the fixed constraints can indeed be partitioned into a finite number of \emph{faces} defined by the active inequality constraints. We can then apply Theorem~\ref{thm:thom} to each face individually.

\begin{lemma}\label{lem:finite}
If~\eqref{eq:opt_param} satisfies Assumption~\ref{ass1} and $g,h$ are of class $C^r$ on $\mathcal{X}$, then $\mathcal{X}$ can be partitioned into finitely many faces \begin{align*}
		\overline{\mathcal{X}}_{\mathbf{J}'} := \left\lbrace x \in \mathcal{X} \, | \, h(x) = 0, \, g_{\mathbf{J}'}(x) = 0 ,\, g_{\mathbf{J} \setminus \mathbf{J}'}(x) < 0\right\rbrace
\end{align*}
where $\mathbf{J}' \subset \mathbf{J} := \{1, \ldots, J\}$. Further, for every $\mathbf{J}'$, the set $\overline{\mathcal{X}}_{\mathbf{J}'}$ is a $C^r$ submanifold of $\mathbb{R}^n$ of dimension $n - I - J'$ where $J' := | \mathbf{J}' |$.
\end{lemma}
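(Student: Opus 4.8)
The plan is to prove the two assertions separately: first that the collection $\{\overline{\mathcal{X}}_{\mathbf{J}'}\}_{\mathbf{J}' \subset \mathbf{J}}$ is a finite partition of $\mathcal{X}$, and then that each nonempty member is a $C^r$ submanifold of the claimed dimension.

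For the partition, I would observe that every $x \in \mathcal{X}$ has a uniquely determined active set $\mathbf{J}(x) = \{j \mid g_j(x) = 0\} \subset \mathbf{J}$. Assigning to each $x$ this set and noting that $x \in \overline{\mathcal{X}}_{\mathbf{J}'}$ holds if and only if $\mathbf{J}(x) = \mathbf{J}'$ shows that the faces are pairwise disjoint and that their union is all of $\mathcal{X}$. Since $\mathbf{J} = \{1, \ldots, J\}$ is finite, there are at most $2^J$ distinct index sets $\mathbf{J}'$, so the partition is finite.

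For the submanifold claim, fix a nonempty face $\overline{\mathcal{X}}_{\mathbf{J}'}$ and set $J' = |\mathbf{J}'|$. I would introduce the stacked map $\Psi(x) := (h(x),\, g_{\mathbf{J}'}(x)) \in \mathbb{R}^{I + J'}$ together with the open set $U := \{x \mid g_j(x) < 0 \text{ for all } j \in \mathbf{J} \setminus \mathbf{J}'\}$, which is open because $g$ is continuous. The key identity is $\overline{\mathcal{X}}_{\mathbf{J}'} = \Psi^{-1}(0) \cap U$: any point satisfying $h = 0$, $g_{\mathbf{J}'} = 0$ and $g_{\mathbf{J}\setminus\mathbf{J}'} < 0$ is automatically feasible and has active set exactly $\mathbf{J}'$. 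Consequently, every point of $\Psi^{-1}(0) \cap U$ lies in $\mathcal{X}$ with $\mathbf{J}(x) = \mathbf{J}'$, so Assumption~\ref{ass1} applies and gives
\[
\rank \nabla \Psi(x) = \rank \begin{bmatrix} \nabla h(x) \\ \nabla g_{\mathbf{J}'}(x) \end{bmatrix} = I + J' \,.
\]
This is precisely the statement that $0$ is a regular value of the restriction $\Psi|_U$. Applying the regular value (preimage) theorem~\cite{hirsch_differential_1976} then yields that $\overline{\mathcal{X}}_{\mathbf{J}'} = (\Psi|_U)^{-1}(0)$ is a $C^r$ submanifold of $U$, hence of $\mathbb{R}^n$, of dimension $n - (I + J')$, as claimed.

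The step I expect to be the main obstacle is the careful bookkeeping around the strict inequalities rather than any deep argument: one must pass to the open set $U$ so that the face coincides with an \emph{entire} zero level set, and thereby verify that the submersion hypothesis of the regular value theorem holds at every preimage point, using that each such point has active set exactly $\mathbf{J}'$. The remaining care is to note that LICQ as stated in Assumption~\ref{ass1} is literally the full-row-rank (submersion) condition, and to treat degenerate cases cleanly: empty faces are vacuously submanifolds, and whenever $n - I - J' < 0$ the face is necessarily empty, consistent with the dimension formula.
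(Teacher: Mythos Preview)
Your proof is correct and follows essentially the same route as the paper's: restrict to the open set $U=\{x:g_{\mathbf{J}\setminus\mathbf{J}'}(x)<0\}$ (the paper's $\mathcal{H}$), observe that Assumption~\ref{ass1} makes $0$ a regular value of $x\mapsto(h(x),g_{\mathbf{J}'}(x))$ on $U$, and apply the preimage theorem from~\cite{hirsch_differential_1976}. Your version is in fact more explicit than the paper's about why the faces partition $\mathcal{X}$ and about the degenerate (empty/negative-dimension) cases, but the argument is the same.
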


\begin{proof} Consider the open set (and hence open submanifold) given by $\mathcal{H} := \{ x \, | \, g_{\mathbf{J} \setminus \mathbf{J}'}(x) < 0 \}$.
 By Assumption~\ref{ass1} the map $x \mapsto (h(x),\, g_{\mathbf{J}'}(x))$ defined from
$\mathcal{H}$ to $\mathbb{R}^I \times \mathbb{R}^{J'}$
 evaluates to zero and has rank $I + J'$ for all $x \in \overline{\mathcal{X}}_{\mathbf{J}'}$. It follows from~\cite[Thm 3.2]{hirsch_differential_1976} that $\mathcal{X}_{\mathbf{J}'}$ is a $C^r$-submanifold of $\mathcal{H}$ (and by extension of $\mathbb{R}^n$) of dimension $n - (I + J')$. Since $J$ is finite, there are only finitely many faces.
\end{proof}

\begin{lemma}\label{lem:face}
Let~\eqref{eq:opt_param} satisfy Assumption~\ref{ass1} and $g,h$ be of class $C^r$ on $\mathcal{X}$ where $r > \max \{ 0, n - I - \widetilde{I}\}$. Consider $\mathbf{J}' \subset \{1, \ldots, J\}$, the associated face $\overline{\mathcal{X}}_{\mathbf{J}'}$ and the map
\begin{align*}
		\Phi: \overline{\mathcal{X}}_{\mathbf{J}'} \times \Xi & \rightarrow \mathbb{R}^{\widetilde{I}} \times \mathbb{R}^{\widetilde{J}} \\
		(x,\xi) & \mapsto (\widetilde{h}(x, \xi), \widetilde{g}(x, \xi)) \,.
\end{align*}
If $\Phi(x, \cdot)$ has rank $\widetilde{I}+\widetilde{J}$ for all $x \in \mathcal{X}$ and all $\xi \in \Xi$, then $h, g_{\mathbf{J}'}, \widetilde{h}$ and $\widetilde{g}$ satisfy the LICQ for almost all $\xi \in \Xi$ and all $x \in \overline{\mathcal{X}}_{\mathbf{J}'}$.
\end{lemma}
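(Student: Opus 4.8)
The plan is to apply Thom's Parametric Transversality Theorem (Theorem~\ref{thm:thom}) face-by-face, performing one application for each possible active set of the \emph{variable} inequality constraints. Fix the face $\overline{\mathcal{X}}_{\mathbf{J}'}$, which by Lemma~\ref{lem:finite} is a $C^r$ submanifold of $\mathbb{R}^n$ of dimension $n - I - J'$; this plays the role of $\mathcal{F}$ in Theorem~\ref{thm:thom}. For each subset $\mathbf{J}'' \subseteq \{1, \ldots, \widetilde{J}\}$ I define the target submanifold
\begin{equation*}
    \mathcal{M}_{\mathbf{J}''} := \{ (y, z) \in \mathbb{R}^{\widetilde{I}} \times \mathbb{R}^{\widetilde{J}} \,|\, y = 0,\ z_{\mathbf{J}''} = 0,\ z_{\{1,\ldots,\widetilde{J}\} \setminus \mathbf{J}''} < 0 \} \,,
\end{equation*}
a $C^\infty$ submanifold of $\mathcal{N} := \mathbb{R}^{\widetilde{I}} \times \mathbb{R}^{\widetilde{J}}$ of codimension $\widetilde{I} + |\mathbf{J}''|$. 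A point $x$ with $\Phi(x, \xi) \in \mathcal{M}_{\mathbf{J}''}$ is exactly a feasible point of the parametrized problem on this face at which the active variable inequalities are those indexed by $\mathbf{J}''$. The central translation I would establish first is that transversality of $\Phi(\cdot, \xi)$ to $\mathcal{M}_{\mathbf{J}''}$ is equivalent to LICQ holding at every such point.

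For this translation, note that $T_x \overline{\mathcal{X}}_{\mathbf{J}'}$ is the common kernel of $\nabla h(x)$ and $\nabla g_{\mathbf{J}'}(x)$, while $T_{\Phi(x,\xi)} \mathcal{M}_{\mathbf{J}''}$ consists of those $(\delta y, \delta z)$ with $\delta y = 0$ and $\delta z_{\mathbf{J}''} = 0$. Hence transversality at $x$ amounts to $D\Phi(\cdot,\xi)(T_x \overline{\mathcal{X}}_{\mathbf{J}'})$ surjecting onto the quotient $\mathcal{N}/T_{\Phi(x,\xi)}\mathcal{M}_{\mathbf{J}''} \cong \mathbb{R}^{\widetilde{I}} \times \mathbb{R}^{|\mathbf{J}''|}$, i.e., to the rows $\nabla_x \widetilde{h}$ and $\nabla_x \widetilde{g}_{\mathbf{J}''}$ being linearly independent modulo $\mathrm{span}\{\nabla h, \nabla g_{\mathbf{J}'}\}$. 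Combined with Assumption~\ref{ass1}, which already supplies the linear independence of $\nabla h$ and $\nabla g_{\mathbf{J}'}$, this is precisely the full-rank condition of Definition~\ref{def:licq} for the combined active set.

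Next I would check the hypotheses of Theorem~\ref{thm:thom} for each $\mathbf{J}''$. The map $\Phi$ is $C^r$ because $\widetilde{h}, \widetilde{g}$ are and the face is a $C^r$ submanifold. The differentiability requirement is $r > \max\{0, \dim \mathcal{F} + \dim \mathcal{M}_{\mathbf{J}''} - \dim \mathcal{N}\}$, and the count gives $(n - I - J') + (\widetilde{J} - |\mathbf{J}''|) - (\widetilde{I} + \widetilde{J}) = n - I - \widetilde{I} - J' - |\mathbf{J}''| \leq n - I - \widetilde{I}$, so the standing hypothesis $r > \max\{0, n - I - \widetilde{I}\}$ covers every pair $\mathbf{J}', \mathbf{J}''$ at once. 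Transversality of $\Phi$ in both arguments is immediate from the rank hypothesis: since $\Phi(x, \cdot)$ already has rank $\widetilde{I} + \widetilde{J} = \dim \mathcal{N}$, the full differential $D\Phi(x,\xi)$ is surjective onto $T_{\Phi(x,\xi)} \mathcal{N}$, which trivially forces transversality to any submanifold. Theorem~\ref{thm:thom} then yields a full-measure set $\Xi_{\mathbf{J}''} \subseteq \Xi$ on which $\Phi(\cdot, \xi)$ is transverse to $\mathcal{M}_{\mathbf{J}''}$.

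Finally, since there are only $2^{\widetilde{J}}$ subsets $\mathbf{J}''$, the intersection $\bigcap_{\mathbf{J}''} \Xi_{\mathbf{J}''}$ still has full measure. For any $\xi$ in this intersection, each feasible point of $\overline{\mathcal{X}}_{\mathbf{J}'}$ lies in exactly one $\mathcal{M}_{\mathbf{J}''}$ according to its variable active set, and transversality there delivers LICQ by the translation above; points with $\dim \mathcal{F} < \mathrm{codim}\, \mathcal{M}_{\mathbf{J}''}$ cannot occur (the transverse intersection is empty), consistent with LICQ holding vacuously. I expect the main obstacle to be the transversality-to-LICQ translation of the second paragraph: correctly choosing $\mathcal{M}_{\mathbf{J}''}$, identifying the tangent and quotient spaces, and verifying that the surjectivity in the definition of transversality coincides with the linear-independence statement of LICQ after quotienting out the fixed active gradients. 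The dimension bookkeeping, the both-arguments transversality, and the finiteness of the union are routine by comparison.
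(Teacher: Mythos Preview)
Your proposal is correct and follows essentially the same route as the paper: fix a face $\overline{\mathcal{X}}_{\mathbf{J}'}$, apply Thom's theorem once for each subset of variable inequality indices, verify the differentiability bound uniformly via $r>\max\{0,n-I-\widetilde{I}\}$, use the full-rank hypothesis on $\Phi(x,\cdot)$ to get transversality in both arguments, and then translate transversality of $\Phi(\cdot,\xi)$ into LICQ by identifying $T_x\overline{\mathcal{X}}_{\mathbf{J}'}$ with the kernel of the fixed active gradients. The only cosmetic difference is that the paper takes the target submanifold to be the full linear subspace $\{(0,y):y_j=0\ \forall j\in\widetilde{\mathbf{J}}'\}$ rather than your open piece with $z_{\{1,\ldots,\widetilde{J}\}\setminus\mathbf{J}''}<0$; both choices yield the same transversality conclusion and the same finite union of null sets.
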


\begin{proof}
Consider a subset $\widetilde{\mathbf{J}}' \subset \{1, \ldots, \widetilde{J}\}$ and let $\widetilde{J}' := | \widetilde{\mathbf{J}}' |$. We define the submanifold of dimension $\widetilde{J} - \widetilde{J}'$ given as
\begin{align*}
		\calM_{\widetilde{\mathbf{J}}'} = \left\lbrace (0, y) \in \mathbb{R}^{\widetilde{I}} \times \mathbb{R}^{\widetilde{J}} \, \middle | \, \forall j \in \widetilde{\mathbf{J}}': \, y_j = 0 \right\rbrace \,.
\end{align*}
Since $\Phi(x, \cdot)$ has rank $\widetilde{I}+\widetilde{J}$ it follows that $\Phi(\cdot, \cdot)$ has rank $\widetilde{I}+\widetilde{J}$ for all $x$ and $\xi$. Then, $\Phi(\cdot, \cdot)$ is transverse to $\calM_{\widetilde{\mathbf{J}}'}$ according to Definition~\ref{def:transv} with $\mathcal{N} = \mathbb{R}^{\widetilde{I}} \times \mathbb{R}^{\widetilde{J}}$, $\mathcal{M} = \calM_{\widetilde{\mathbf{J}}'}$ and $\mathcal{F}' = \overline{\mathcal{X}}_{\mathbf{J}'} \times \Xi$.
This is independent of the choice of $\widetilde{\mathbf{J}}'$.

Next we apply Theorem~\ref{thm:thom} with $\mathcal{F} = \overline{\mathcal{X}}_{\mathbf{J}'}$ and $\mathcal{N}, \mathcal{M}$, $\Phi$ as above. The degree of differentiability $r$ needs to satisfy
\begin{align*}
		r & > \max \{0, \dim \mathcal{X}_{J'} + \dim \calM_\mathcal{J'} - \dim \mathbb{R}^{\widetilde{I}} \times \mathbb{R}^{\widetilde{J}} \} \\
		& =\max \{0, (n- I - J') + (\widetilde{J} - \widetilde{J}') - (\widetilde{I} +\widetilde{J}) \}
		\end{align*}
Clearly, $r > \max \{0, n- I - \widetilde{I}\}$ is sufficient. Thus, Theorem~\ref{thm:thom} states that the map
\begin{align*} \Phi(\cdot, \xi): \overline{\mathcal{X}}_{\mathbf{J}'} & \rightarrow \mathbb{R}^{\widetilde{I}} \times \mathbb{R}^{\widetilde{J}}\\
				 x &\mapsto (\widetilde{h}(x, \xi), \, \widetilde{g}(x, \xi))
\end{align*}
is transverse to $\calM_{\widetilde{\mathbf{J}}'}$ for almost all $\xi$. Finally, we need to show that $\Phi(\cdot, \xi)$ being transverse to $\calM_{\widetilde{\mathbf{J}}'}$ for a given $\xi$ implies that
\begin{align*}
		x \mapsto \left( h(x), \, g_{\mathbf{J}'}(x),\, \widetilde{h}(x, \xi),\, \widetilde{g}_{\widetilde{\mathbf{J}}'}(x, \xi) \right)
\end{align*}
has full rank for all $x \in \overline{\mathcal{X}}_{\mathbf{J}'}$.

Since $\mathbb{R}^{\widetilde{I}} \times \mathbb{R}^{\widetilde{J}}$ and $\calM_{\widetilde{\mathbf{J}}'}$ are isomorphic to their tangent space, the definition of transversality applied to $\Phi(\cdot, \xi)$ yields
\begin{align*}
		\mathbb{R}^{\widetilde{I}} \times \mathbb{R}^{\widetilde{J}} = \calM_{\widetilde{\mathbf{J}}'} +
		\begin{bmatrix}
			 \nabla \widetilde{h} (x) \\
			 \nabla \widetilde{g} (x)
		\end{bmatrix}
		\ker \begin{bmatrix}
			 \nabla h (x) \\
			 \nabla {g}_{\mathbf{J}' (x)}
		\end{bmatrix}
\end{align*}
for all $x \in \overline{\mathcal{X}}_{\mathbf{J}'}$ which is equivalent to
\begin{equation} \label{eq:transv2}
		\mathbb{R}^{\widetilde{I}} \times \mathbb{R}^{\widetilde{J}'} =
		\begin{bmatrix}
			 \nabla_x \widetilde{h} (x, \xi) \\
			 \nabla_x \widetilde{g}_{\widetilde{\mathbf{J}}'} (x, \xi)
		\end{bmatrix}
		\ker \begin{bmatrix}
			 \nabla h (x) \\
			 \nabla {g}_{\mathbf{J}'}(x)
		\end{bmatrix}
\end{equation}
by eliminating the subspace spanned by $\calM_{\widetilde{\mathbf{J}}'}$. It follows that
\begin{align*}
		\ker \begin{bmatrix}
			 \nabla_x \widetilde{h} (x, \xi) \\
			 \nabla_x \widetilde{g}_{\widetilde{\mathbf{J}}'} (x, \xi)
		\end{bmatrix} ^\perp \subset
		\ker \begin{bmatrix}
			 \nabla h (x) \\
			 \nabla {g}_{{\mathbf{J}}'} (x) \\
		\end{bmatrix}
\end{align*}
and consequently,
using the fundamental theorem of linear algebra, we get
\begin{equation*}
		\range \begin{bmatrix}
			 \nabla_x \widetilde{h} (x, \xi) \\
			 \nabla_x \widetilde{g}_{\widetilde{\mathbf{J}}'} (x, \xi)
		\end{bmatrix}^T \perp
		\range \begin{bmatrix}
			 \nabla h (x) \\
			 \nabla {g}_{{\mathbf{J}}'} (x) \\
		\end{bmatrix}^T \,.
\end{equation*}

Together with the fact that $[
			 \nabla h^T (x) \ 
			 \nabla {g}^T_{{\mathbf{J}}'} (x)
		]$ has rank $I + J'$ (by Assumption~\ref{ass1}) and that $[
			 \nabla_x \widetilde{h}^T (x, \xi) \ 
			 \nabla_x \widetilde{g}^T_{\widetilde{\mathbf{J}}'} (x, \xi)
		]$ has rank $\widetilde{I} + \widetilde{J}'$ (by~\eqref{eq:transv2}) it follows that $\nabla g_{\mathbf{J}'} (x), \nabla h (x), \nabla_x \widetilde{g}_{\widetilde{\mathbf{J}}'} (x, \xi)$ and $\nabla_x \widetilde{h} (x, \xi)$ are linearly independent for all $x \in \overline{\mathcal{X}}_{\mathbf{J}'}$ and all $\xi \in \Xi \setminus \Xi_{\widetilde{\mathbf{J}}'}$ where $\Xi_{\widetilde{\mathbf{J}}'}$ has zero measure.
Finally, since the finite union over all $\widetilde{\mathbf{J}}'$ of $\Xi_{\widetilde{\mathbf{J}}'}$ is a zero measure set, we conclude that the LICQ hold generically for a given face $\mathcal{X}_{\mathbf{J}}'$.
\end{proof}

By combining Lemmas~\ref{lem:finite} and~\ref{lem:face} we finally prove Theorem~\ref{thm:base1}, as the finite union over all $\mathbf{J}' \subset \mathbf{J}$ of the zero-measure subsets of $\Xi$ on which LICQ fails is again a zero-measure set. 

\nocite{*}
\bibliographystyle{IEEEtran}
\bibliography{bibliography_static}

\end{document}